\numberwithin{equation}{section} 
\numberwithin{figure}{section} 
\numberwithin{table}{section} 
\newtheorem{thm}{Theorem}[section]
\newtheorem{cor}[thm]{Corollary}
\newtheorem{quest}[thm]{Question}
\theoremstyle{definition}
\newtheorem{exmp}[thm]{Example}
\theoremstyle{remark}
\newtheorem{rem}[thm]{Remark}
\DeclareMathOperator{\Hom}{Hom}
\DeclareMathOperator{\Gal}{Gal}
\DeclareMathOperator{\Tr}{Tr}
\DeclareMathOperator{\End}{End}
\DeclareMathOperator{\Frob}{Frob}
\DeclareMathOperator{\Pic}{Pic}
\DeclareMathOperator{\Spec}{Spec}
\DeclareMathOperator{\rk}{rank}
\DeclareMathOperator{\cha}{char}
\DeclareMathOperator{\Fal}{Fal}
\DeclareMathOperator{\Sing}{Sing}
\newcommand{\horrule}[1]{\rule{\linewidth}{#1}} 
\title{	
\normalfont \normalsize 
\textsc{} \\ [25pt] 
\horrule{0.5pt} \\[0.4cm] 
\huge Decomposability and Mordell-Weil ranks of Jacobians using Picard numbers  \\ 
\horrule{2pt} \\[0.5cm] 
}
\author{Soohyun Park} 
\date{\normalsize\today} 
\begin{document}

\maketitle 

\begin{abstract}
	\noindent We use methods for computing Picard numbers of reductions of K3 surfaces in order to study the decomposability of Jacobians over number fields and the variance of Mordell-Weil ranks of families of Jacobians over different ground fields. For example, we look at surfaces whose Picard numbers jump in rank at all primes of good reduction using Mordell-Weil groups of Jacobians and show that the genus of curves over number fields whose Jacobians are isomorphic to a product of elliptic curves satisfying certain reduction conditions is bounded. The isomorphism result addresses the number field analogue of some questions of Ekedahl and Serre on decomposability of Jacobians of curves into elliptic curves.
\end{abstract}



\section{Introduction}


Given a family of curves, we can associate a family of Jacobians to it. Then, it is natural to consider the following question:


\begin{quest}\label{Q1}
	How does the Mordell-Weil rank of a family of Jacobians vary as we change the base field?
\end{quest}

By Mordell-Weil ranks of families, we mean the Mordell-Weil rank of a Jacobian over a function field. Other than being an interesting question to think about in general, studying the function field case of a problem on properties of rational points has been useful for understanding what happens in other global fields such as number fields and some other questions that we may be interested in. One topic we will study is the relationship between Picard numbers of products of curves and Jacobians of certain curves. \\

The other question that we will spend most of our time studying is the following geometric question:


\begin{quest}\label{Q2}
	Is there an upper bound on the genus of curves over a fixed number field which have Jacobians isomorphic to a product of elliptic curves?
\end{quest}

This is motivated by some questions raised by Ekedahl and Serre \cite{ES} for curves over $\mathbb{C}$. More specifically, they asked the following questions on the decomposability of Jacobians of smooth projective curves over $\mathbb{C}$ into products of elliptic curves up to isogeny or isomorphism:

\begin{enumerate}
	\item Does there exist a Jacobian of a curve of genus $g$ which is decomposable into a product of elliptic curves up to isogeny for \emph{every} $g > 0$?
	
	\item Is the set of such $g$ bounded?
	
	\item Does there exist $g > 3$ such that some curve of genus $g$ has its Jacobian \emph{isomorphic} to a product of elliptic curves?
\end{enumerate}

There has been some previous work on the second problem (e.g. \cite{P}, \cite{R}) which has been successful in producing a large number of explicit decompositions coming from representations of the endomorphism algebra of the Jacobian of a curve induced by a finite group action. Another perspective on the problem has to do with unlikely intersections involving (weakly) special subvarieties of Shimura varieties (e.g. \cite{LZ}). One example of the tools used is analogues of positivity results from complex algebraic geometry in Arakelov theory such as slope inequalities. Motivated by the second approach, Chen, Lu, and Zuo \cite{CLZ} prove a finiteness result for Jacobians of curves over number fields in the self-product case assuming the Sato-Tate conjecture. This is interesting since some of the authors working on curves over $\mathbb{C}$ seem to guess that the genus is unbounded in the second question posed by Ekedahl and Serre. \\


In general, products of elliptic curves tend to large Picard numbers. This is especially true for self-products of elliptic curves $E^g$, which have the maximal Picard number $g^2$ if $E$ is a CM elliptic curve and $\frac{g(g + 1)}{2}$ otherwise \cite{HL}. This would mean that Jacobians isogenous to a product of elliptic curves would have a large Picard number. However, this is something possible. In fact, Beauville \cite{Bea} gives examples of Jacobians of curves of genus $4$ and $10$ which are \emph{isomorphic} to a product of isogenous CM elliptic curves. This gives a positive answer to the third question of Ekedahl and Serre stated at the beginning since it examples of curves of genus $g > 3$ whose Jacobians are isomorphic to a product of elliptic curves. Given the second question of Ekedahl and Serre, one may ask whether the genus of such curves is bounded as in Question \ref{Q2}. \\

Instead of attempting to directly compute the Picard number over the ground field or its algebraic closure, the main tool which we will use to try to understand Question \ref{Q2} is computation of Picard numbers of reductions of a product of curves $C \times C$ for a curve $C$ over a number field. In Theorem \ref{gbound}, we use this to obtain a result analogous to that of Chen, Lu, and Zuo \cite{CLZ} without assuming the Sato-Tate conjecture. The distribution modulo reductions at different primes was recently studied for K3 surfaces in recent work of Costa, Elsenhans, and Jahnel \cite{CEJ}. Although this has not been done yet for products of curves, many of the main observations made here also seem to carry over to our case since the Tate conjecture holds for products of curves. \\

The connection of this work with Question \ref{Q1} comes from a formula of Ulmer (Theorem 6.4 of \cite{U}) which expresses the Mordell-Weil rank of certain Jacobians over function fields in terms of homomorphisms between two Jacobians and some terms that depend on the geometry of the construction. More specifically, the behavior of both the Picard number and the Mordell-Weil rank of Jacobians in these families mainly depend on the how the same object under reduction at good primes. \\

The methods used by Costa, Elsenhans, and Jahnel \cite{CEJ} which are used to understand this jumping behavior will be described in further detail in Section 2. Next, we will apply the observations made in \cite{CEJ} to a product of curves in Section 3. Finally, Section 4 will contain applications of these methods to the the questions we mentioned at the beginning (\ref{Q1}, \ref{Q2}). Section 4.1 will give examples of applications of this work to Question \ref{Q1} as we consider the Mordell-Weil rank over function fields over an algebraically closed field and its residue fields. Question \ref{Q2} and other properties of decomposable Jacobians over number fields will be considered in Section 4.2 mostly through the proof of Theorem \ref{gbound}.

\section{Distribution of Picard numbers of reductions of surfaces}

In this section, we will describe some of the tools used by Costa, Elsenhans, and Jahnel in \cite{CEJ} to analyze Picard numbers of reductions of $K3$ surfaces following the first section of their paper \cite{CEJ}. \\

Some of the techniques originate from explicit computations of Picard numbers of K3 surfaces over number fields using reductions modulo good primes in work of van Luijk \cite{vL} which was later further extended by Elsenhans in Jahnel in various directions. \\

Given a surface $S$ over a number field $K$, it is well-known that $\rk \Pic S_{\overline{\mathbb{F}}_\mathfrak{p}} \ge \rk \Pic S_{\overline{K}}$. However, it is not as clear how to determine exactly when the rank actually jumps after specialization. \\

Some of the main tools they use to try to measure this are certain characters depending on the action of the (absolute) Frobenius map on $\Pic S_{\overline{K}}$, $H^2_{\'et}(S_{\overline{K}}, \mathbb{Q}_l(1))$, and transcendental part $T$ of $H^2_{\'et}(S_{\overline{K}}, \mathbb{Q}_l(1))$. \\

The character which is most directly connected to their main results on jumping of Picard numbers after reduction is the \emph{jump character} \[ \left( \frac{\Delta_{H^2}(S)\Delta_{\Pic}(S)}{\mathfrak{p}} \right), \]

where $K(\Delta_{\Pic}(S))$ is the minimal field of definition of $\bigwedge^{\max} \Pic S_{\overline{K}}$ and $\Delta_{H^2}(S)$ is defined similarly. \\

The relation of these objects to the results of \cite{CEJ} on jumping of Picard numbers of K3 surfaces after reduction is summarized in the following theorem:

\begin{thm} \label{jumping} (Costa, Elsenhans, Jahnel \cite{CEJ})
	Let $K$ be a number field and $S$ be a K3 surface over $K$. Moreover, let $\mathfrak{p} \subset \mathcal{O}_K$ be a prime of good reduction and residue characteristic $\ne 2$. 
	
	\begin{enumerate}
		\item The following two questions hold:
		
		\begin{itemize}
			\item $\det(\Frob_{\mathfrak{p}} : H^2_{\'et}(S_{\overline{K}}, \mathbb{Q}_l(1)) ) = \left( \frac{\Delta_{H^2}(S)}{\mathfrak{p}} \right)$
			
			\item $\det(\Frob_{\mathfrak{p}} : T ) = \left( \frac{\Delta_{H^2}(S)\Delta_{\Pic}(S)}{\mathfrak{p}} \right)$
		\end{itemize}
		
		\item If $\rk \Pic S_{\overline{K}}$ is even, then \[ \left( \frac{\Delta_{H^2}(S)\Delta_{\Pic}(S)}{\mathfrak{p}} \right) = -1 \Rightarrow \rk \Pic S_{\overline{\mathbb{F}}_{\mathfrak{p}}} \ge \rk \Pic S_{\overline{K}} + 2. \]
			
		In other words, the Picard number jumps at the primes $\mathfrak{p}$ which are inert in $K(\sqrt{\Delta_{H^2}(S)\Delta_{\Pic}(S)})$.
		
	\end{enumerate}
\end{thm}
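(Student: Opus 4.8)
The plan is to read both parts off the action of $\Frob_{\mathfrak{p}}$ on $\ell$-adic cohomology and transport everything to the reduction by smooth proper base change. First I would record the structural facts for a K3 surface $S$: one has $b_2 = 22$ and $\Pic S_{\overline{K}} = NS(S_{\overline{K}})$ (the irregularity vanishes), the cycle class map embeds $\Pic S_{\overline{K}} \otimes \mathbb{Q}_l$ into $H^2_{\'et}(S_{\overline{K}}, \mathbb{Q}_l(1))$ as the algebraic part, and the cup product induces a $\Gal(\overline{K}/K)$-equivariant nondegenerate symmetric $\mathbb{Q}_l$-valued pairing after the Tate twist. Defining $T$ as the orthogonal complement gives a decomposition $H^2_{\'et}(S_{\overline{K}}, \mathbb{Q}_l(1)) = (\Pic S_{\overline{K}} \otimes \mathbb{Q}_l) \oplus T$ of Galois modules. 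Since $\Frob_{\mathfrak{p}}$ preserves the form, it acts orthogonally on $H^2_{\'et}(S_{\overline{K}}, \mathbb{Q}_l(1))$ and on $T$, so the determinant characters $\det(\Frob_{\mathfrak{p}} \mid H^2_{\'et}(S_{\overline{K}}, \mathbb{Q}_l(1)))$ and $\det(\Frob_{\mathfrak{p}} \mid T)$ take values in $\{\pm 1\}$ and are quadratic characters of $\Gal(\overline{K}/K)$ unramified at the good prime $\mathfrak{p}$. Part (1) then follows essentially by unwinding the definitions: $\Delta_{H^2}(S)$ and $\Delta_{\Pic}(S)$ are exactly the discriminants cutting out the fields of definition of $\bigwedge^{\max}$ of $H^2_{\'et}(S_{\overline{K}}, \mathbb{Q}_l(1))$ and of $\Pic S_{\overline{K}}$, so evaluating these characters at $\Frob_{\mathfrak{p}}$ produces the quadratic residue symbols. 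The second bullet follows because $\det$ is multiplicative across the orthogonal direct sum, $\det(\Frob_{\mathfrak{p}} \mid \Pic S_{\overline{K}} \otimes \mathbb{Q}_l) = \left( \frac{\Delta_{\Pic}(S)}{\mathfrak{p}} \right)$, and the residue symbol is multiplicative; the hypothesis on the residue characteristic is what keeps these symbols unambiguous and matched to the splitting of $\mathfrak{p}$.

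For part (2) I would invoke the Tate conjecture together with semisimplicity of Frobenius for K3 surfaces over finite fields of characteristic $\ne 2$, both of which are known. Smooth proper base change identifies $H^2_{\'et}(S_{\overline{\mathbb{F}}_{\mathfrak{p}}}, \mathbb{Q}_l)$ with $H^2_{\'et}(S_{\overline{K}}, \mathbb{Q}_l)$ compatibly with the Frobenius action, so the characteristic polynomial of Frobenius on the reduction equals that of $\Frob_{\mathfrak{p}}$ on the generic fiber. The Tate conjecture then computes $\rk \Pic S_{\overline{\mathbb{F}}_{\mathfrak{p}}}$ as the number of eigenvalues of $\Frob_{\mathfrak{p}}$ on $H^2_{\'et}(S_{\overline{K}}, \mathbb{Q}_l(1))$ that are roots of unity. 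On the algebraic summand all $\rho := \rk \Pic S_{\overline{K}}$ eigenvalues are roots of unity, so the jump $\rk \Pic S_{\overline{\mathbb{F}}_{\mathfrak{p}}} - \rho$ equals precisely the number of root-of-unity eigenvalues of $\Frob_{\mathfrak{p}}$ acting on $T$.

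Now the hypothesis $\left( \frac{\Delta_{H^2}(S)\Delta_{\Pic}(S)}{\mathfrak{p}} \right) = -1$ says $\det(\Frob_{\mathfrak{p}} \mid T) = -1$ by part (1). Because $\Frob_{\mathfrak{p}}$ acts orthogonally on $T$, its eigenvalues are stable under $\lambda \mapsto \lambda^{-1}$, so each pair $\{\lambda, \lambda^{-1}\}$ with $\lambda \ne \pm 1$ contributes $+1$ to the determinant, the eigenvalue $+1$ contributes $+1$, and hence $\det(\Frob_{\mathfrak{p}} \mid T) = (-1)^{m}$, where $m$ is the multiplicity of the eigenvalue $-1$. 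Thus $m$ is odd, so $-1$ occurs as an eigenvalue; picking an honest eigenvector (which becomes Frobenius-invariant over $\mathbb{F}_{\mathfrak{p}^2}$) shows the jump is at least $1$. To upgrade this to $\ge 2$ I would prove that $\rk \Pic S_{\overline{\mathbb{F}}_{\mathfrak{p}}}$ is always even: by Deligne all $22$ eigenvalues have archimedean absolute value $1$ after the twist, so any eigenvalue that is not a root of unity is non-real and occurs with its complex conjugate ($=$ inverse), forcing an even codimension, while Tate plus semisimplicity identifies the complementary dimension with $\rk \Pic S_{\overline{\mathbb{F}}_{\mathfrak{p}}}$. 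Since $\rho$ is even by hypothesis, the jump is even; being $\ge 1$, it is $\ge 2$, which is exactly the inert-prime statement.

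The hard part will be part (2), and within it the parity step rather than the existence of a single new class. Producing the eigenvalue $-1$ is just linear algebra once one knows $\Frob_{\mathfrak{p}}$ is orthogonal on $T$ with determinant $-1$, but the jump-by-\emph{two} conclusion rests on the evenness of $\rk \Pic S_{\overline{\mathbb{F}}_{\mathfrak{p}}}$, which genuinely uses the full strength of the Tate conjecture and semisimplicity of Frobenius for K3 surfaces together with the Weil bounds needed to exclude real transcendental eigenvalues. I would also take care throughout that $\mathfrak{p}$ is a good prime of residue characteristic $\ne 2$, since this is what makes the quadratic residue symbols in part (1) well defined and unramified and what guarantees the availability of the Tate conjecture for the reduction; the excluded case $p = 2$ is precisely where these inputs can fail.
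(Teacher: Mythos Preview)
The paper does not give its own proof of Theorem~\ref{jumping}: it is quoted verbatim from Costa--Elsenhans--Jahnel \cite{CEJ} as background for Section~3, so there is no in-paper argument to compare against. That said, your sketch is correct and is exactly the strategy the paper presupposes when it proves the product-of-curves analogue in Section~3, where it writes ``Most of the proof is the same as that of Proposition~2.4.2 since the Tate conjecture holds for products of curves. So, it suffices to show the Picard number of $C\times C$ over $\mathbb{F}_{\mathfrak{p}}$ is always even.'' Your parity step for K3 surfaces---the $22$ Frobenius eigenvalues on $H^2_{\text{\'et}}(1)$ lie on the unit circle, the non-root-of-unity ones are non-real and occur in complex-conjugate pairs, hence the geometric Picard number of the reduction is even---is precisely this missing ingredient specialized to the K3 case, and the rest of your argument (orthogonality of $\Frob_{\mathfrak{p}}$ on $T$ forcing $\det=(-1)^{\operatorname{mult}(-1)}$, smooth proper base change, Tate plus semisimplicity over finite fields) is the standard route taken in \cite{CEJ}.
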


\section{Applications to products of curves}


We can show that the main theoretical observation of \cite{CEJ} on the distribution of Picard numbers of K3 surfaces over number fields under specialization (Proposition 2.4.2) carries over to products of curves. 

\begin{thm}
	Let $C$ be a smooth projective curve over a number field $K$ and $\mathfrak{p} \subset \mathcal{O}_K$ be a prime of good reduction. If $\det\Frob_{\mathfrak{p}}|_T = -1$, then $\rk \Pic (C \times C)_{\overline{\mathbb{F}}_{\mathfrak{p}}} \ge \rk \Pic (C \times C)_{\overline{K}} + 2$. 
\end{thm}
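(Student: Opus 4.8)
The plan is to transport the argument behind Theorem \ref{jumping} (equivalently Proposition 2.4.2 of \cite{CEJ}) to the surface $X = C \times C$, the one genuinely new feature being that the Tate conjecture is available for $X$ unconditionally. Fix a prime $l$ different from the residue characteristic of $\mathfrak{p}$ and let $q = \#(\mathcal{O}_K/\mathfrak{p})$. First I would use smooth proper base change to identify $H^2_{\text{\'et}}(X_{\overline{K}}, \mathbb{Q}_l(1))$ with $H^2_{\text{\'et}}(X_{\overline{\mathbb{F}}_\mathfrak{p}}, \mathbb{Q}_l(1))$ as modules over the decomposition group at $\mathfrak{p}$, so that a geometric Frobenius $\Frob_\mathfrak{p}$ acts on the characteristic-zero cohomology. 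The cup product makes this a nondegenerate symmetric $\Frob_\mathfrak{p}$-invariant pairing (the Tate twist removes the cyclotomic factor), and by the Hodge index theorem it restricts nondegenerately to the orthogonal decomposition $H^2_{\text{\'et}}(X_{\overline{K}}, \mathbb{Q}_l(1)) = (\Pic X_{\overline{K}} \otimes \mathbb{Q}_l) \oplus T$; both summands are $\Frob_\mathfrak{p}$-stable because Galois preserves algebraic classes.

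Second, I would invoke the Tate conjecture for $X$, which is known: since $\mathrm{NS}(X) \cong \Z^2 \oplus \Hom(J,J)$, it reduces to Tate's theorem on homomorphisms of abelian varieties over finite fields. This yields that $\rk \Pic X_{\overline{\mathbb{F}}_\mathfrak{p}}$ equals the number of eigenvalues of $\Frob_\mathfrak{p}$ on $H^2_{\text{\'et}}(X_{\overline{K}}, \mathbb{Q}_l(1))$ that are roots of unity. As $\Frob_\mathfrak{p}$ acts through a finite quotient on the integral lattice $\Pic X_{\overline{K}}$, all $\rk \Pic X_{\overline{K}}$ of its eigenvalues on the algebraic summand are roots of unity; hence the jump $\rk \Pic X_{\overline{\mathbb{F}}_\mathfrak{p}} - \rk \Pic X_{\overline{K}}$ equals the number of root-of-unity eigenvalues of $\Frob_\mathfrak{p}|_T$, and it suffices to show this number is at least $2$.

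Third, the eigenvalues of $\Frob_\mathfrak{p}|_T$ lie on the unit circle, and since the characteristic polynomial has rational coefficients they are closed under $\lambda \mapsto \overline{\lambda} = \lambda^{-1}$. Thus any root-of-unity eigenvalue $\zeta \neq \pm 1$ forces its distinct conjugate $\zeta^{-1}$ to appear as well, already producing two new classes. Because $\Frob_\mathfrak{p}|_T$ is orthogonal its determinant equals $(-1)^m$, where $m$ is the multiplicity of the eigenvalue $-1$; the hypothesis $\det \Frob_\mathfrak{p}|_T = -1$ therefore makes $m$ odd, so $m \geq 1$, and at the same time excludes the degenerate case in which the only root of unity present is a single $+1$. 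The one remaining possibility to rule out is that $-1$ occurs in $T$ with multiplicity exactly one and no further root of unity occurs.

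The hard part is exactly this last step, and here I would exploit the self-product structure. Writing $\alpha_1, \dots, \alpha_{2g}$ for the Frobenius eigenvalues on $H^1_{\text{\'et}}(C_{\overline{\mathbb{F}}_\mathfrak{p}}, \mathbb{Q}_l)$, the transcendental part lies in $H^1 \otimes H^1$, where $\Frob_\mathfrak{p}$ acts with eigenvalues $\alpha_i \alpha_j / q$, so the eigenvalue $-1$ comes precisely from the pairs with $\alpha_i \alpha_j = -q$. This solution set is stable under the swap $(i,j) \mapsto (j,i)$ induced by the involution of $X$, and its swap-fixed points satisfy $\alpha_i^2 = -q$, i.e. $\alpha_i = \pm\sqrt{-q}$, which occur in complex-conjugate pairs; hence the multiplicity of $-1$ on all of $H^1 \otimes H^1(1)$ is even. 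Combined with $\det \Frob_\mathfrak{p}|_{H^1} = q^g$, which forces $\det \Frob_\mathfrak{p} = 1$ on $H^1 \otimes H^1(1)$ and therefore $\det \Frob_\mathfrak{p} = -1$ on the algebraic summand $\Hom(J,J)\otimes\mathbb{Q}_l$ inside $H^1 \otimes H^1(1)$, I would track how this even total multiplicity distributes between $T$ and the algebraic part to conclude $m \neq 1$. The main obstacle is precisely this parity bookkeeping; the cleanest sufficient condition is that $\rk \Pic X_{\overline{K}}$ (equivalently $\rk \End(J_{\overline{K}})$) be even, which would recover the parity hypothesis of Theorem \ref{jumping}, and absent that one must use the refined decomposition $H^1 \otimes H^1 = \mathrm{Sym}^2 H^1 \oplus \wedge^2 H^1$ together with the compatibility of $\Frob_\mathfrak{p}$ with the Rosati involution to force the transcendental multiplicity of $-1$ to be even.
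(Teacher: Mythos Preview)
Your first three paragraphs are correct and coincide with the paper's opening: smooth proper base change, the orthogonal splitting $H^2(X_{\overline K},\mathbb{Q}_l(1))=(\Pic X_{\overline K}\otimes\mathbb{Q}_l)\oplus T$, the Tate conjecture for $X=C\times C$ via Tate's theorem on homomorphisms of abelian varieties, and the reduction to exhibiting at least two root-of-unity eigenvalues of $\Frob_{\mathfrak p}|_T$. The gap is in your last step. You try to rule out the bad case ``$-1$ with multiplicity one and no other root of unity on $T$'' by showing that the multiplicity of $-1$ on all of $H^1\otimes H^1(1)$ is even (this is correct, via the swap $(i,j)\leftrightarrow(j,i)$ and the conjugate pairing on the diagonal solutions $\alpha_i^2=-q$) and then pushing the parity through the decomposition into $\End_{\overline K}J\otimes\mathbb{Q}_l$ and $T$. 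As you yourself concede, this does not close: it only recovers that the multiplicity on $T$ is odd, which you already knew from $\det\Frob|_T=-1$, and the $\mathrm{Sym}^2\oplus\wedge^2$/Rosati refinement you gesture at is not carried out and does not obviously separate the parities.

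The paper's resolution uses the same pairing idea but aims it at a different target, sidestepping the algebraic/transcendental split altogether. Rather than the eigenvalue $-q$ on $T$, it looks at the eigenvalue $q$ on the whole of $H^2$: the off-diagonal solutions of $\alpha_i\alpha_j=q$ pair off under the swap, the diagonal solutions $\alpha_i^2=q$ pair off under the Weil-conjecture symmetry $\alpha_i\leftrightarrow q/\alpha_i$, and together with the two copies of $q$ from $H^0\otimes H^2$ and $H^2\otimes H^0$ this shows that the multiplicity of $q$ in $P_2(C\times C,T)$ --- by Tate, exactly $\rk\Pic(C\times C)_{\overline{\mathbb F}_{\mathfrak p}}$ --- is always even. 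The paper then declares that this evenness, fed back into the CEJ argument, finishes the proof. So the missing move in your attempt was not a new tool but a change of target: apply the swap-and-conjugate parity to $q$ on the full $H^2$, where it computes the reduction Picard number directly, rather than to $-q$ on $T$, where the split into algebraic and transcendental pieces obstructs the count. Your residual worry about needing $\rk\Pic X_{\overline K}$ even is the parity hypothesis from Theorem~\ref{jumping}; the paper absorbs it into its appeal to CEJ's Proposition~2.4.2 rather than isolating it.
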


\begin{proof}
	Most of the proof is same as that of Proposition 2.4.2 since the Tate conjecture holds for products of curves. So, it suffices to show the Picard number of $C \times C$ over $\mathbf{F}_{\mathfrak{p}}$ is always even. \\
	
	As in \cite{Y}, let $f(C, T) = \prod_{i = 1}^{2g} (1 - \alpha_i T)$ be the Weil polynomial for $C$. Then, we have that \[ P_2(C \times C, T) = (1 - qT)^2 \prod_{i = 1}^{2g} \prod_{j = 1}^{2g} (1 - \alpha_i \alpha_j T) \]
	
	is the reverse characteristic polynomial the action of $\Frob_{\mathfrak{p}}$ on $H^2_{\'et}(C \times C, \mathbb{Q}_l)$ for $l \ne |\mathbf{F}_{\mathfrak{p}}|$. So, the eigenvalues are actually the $\alpha_i\alpha_j$. Recall that the Picard number is the multiplicity of $q$ as a root of $P_2(C \times C, T)$ since the Tate conjecture holds for this surface. Since $\alpha_i\alpha_j = \alpha_j\alpha_i$ for $i \ne j$, we have that these eigenvalues always have even multiplicity. Now suppose that $\alpha_i^2 = q$. Then, we also have $\alpha_{2g - i}^2 = q$ since $\alpha_i \alpha_{2g - i} = q$ by the Weil conjectures. Thus, the $\alpha_i$ such that $\alpha_i^2 = q$ always come in pairs and the Picard number must be even.
\end{proof}

\begin{cor}
	If $\det\Frob_{\mathfrak{p}}|_T = -1$, the density of jump primes is at least $\frac{1}{2}$.
\end{cor}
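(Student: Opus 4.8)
The plan is to recognize the assignment $\mathfrak{p} \mapsto \det\Frob_{\mathfrak{p}}|_T$ as a nontrivial quadratic Galois character and then invoke the Chebotarev density theorem. The first step is to establish, for the surface $C \times C$, the analogue of part (1) of Theorem \ref{jumping}: namely that $\det\Frob_{\mathfrak{p}}|_T$ equals a Kronecker symbol $\left( \frac{\Delta}{\mathfrak{p}} \right)$, where $\Delta$ is the product of the discriminants attached to $\bigwedge^{\max} H^2_{\'et}((C\times C)_{\overline{K}}, \mathbb{Q}_l(1))$ and $\bigwedge^{\max}\Pic(C\times C)_{\overline{K}}$. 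Since $T$ is the orthogonal complement of the Picard lattice inside $H^2_{\'et}(1)$, its top exterior power is defined over the compositum of the two minimal fields of definition appearing in the statement of Theorem \ref{jumping}. After the Tate twist the pairing on $T$ is Frobenius-invariant, so $\Frob_{\mathfrak{p}}$ acts orthogonally and its determinant lies in $\{\pm 1\}$; this sign is then computed by the quadratic character cutting out $L := K(\sqrt{\Delta})$.

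Second, I would read the hypothesis $\det\Frob_{\mathfrak{p}}|_T = -1$ as the assertion that this character is nontrivial, i.e. that $\Delta$ is not a square in $K^{\times}$, so that $L/K$ is a genuine quadratic extension. The primes $\mathfrak{p}$ at which $\det\Frob_{\mathfrak{p}}|_T = -1$ are then precisely the primes that remain inert in $L/K$. Applying the Chebotarev density theorem to $\Gal(L/K) \cong \mathbb{Z}/2\mathbb{Z}$, the set of such primes has natural density exactly $\frac{1}{2}$ among the primes of $K$ of good reduction.

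Finally, the preceding theorem shows that each prime $\mathfrak{p}$ with $\det\Frob_{\mathfrak{p}}|_T = -1$ satisfies $\rk \Pic (C \times C)_{\overline{\mathbb{F}}_{\mathfrak{p}}} \ge \rk \Pic (C \times C)_{\overline{K}} + 2$, hence is a jump prime. Combining this with the density computation gives that the density of jump primes is at least $\frac{1}{2}$. The bound need not be an equality: additional jumping can occur at the split primes, where the character takes the value $+1$ but the Picard number may nonetheless increase.

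I expect the main obstacle to be the first step, namely verifying that $\det\Frob_{\mathfrak{p}}|_T$ genuinely arises from a single quadratic Galois representation so that Chebotarev applies. This requires transporting part (1) of Theorem \ref{jumping} from the K3 setting to $C \times C$: one must confirm that the relevant determinant on the transcendental lattice is $\pm 1$ rather than a higher root of unity, and that it is expressible through the discriminants of the associated lattices via a Kronecker symbol. Once the character has been identified in this way, the remaining density argument is an immediate application of the Chebotarev density theorem.
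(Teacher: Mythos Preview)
Your proposal is correct and follows essentially the same route as the paper: interpret $\det\Frob_{\mathfrak{p}}|_T$ as the value of the quadratic jump character, read the hypothesis as this character being nontrivial, and apply Chebotarev to the associated quadratic extension to get density $\tfrac{1}{2}$ for the inert primes, each of which is a jump prime by the preceding theorem. The paper's proof is a one-line invocation of exactly these ingredients; the point you flag as the main obstacle (that the determinant on $T$ is genuinely a quadratic character computable via the discriminants) is handled in the paper immediately after the corollary, where it is shown that $\left(\frac{\Delta_{H^2}(C\times C)}{\mathfrak{p}}\right)=1$ for all good $\mathfrak{p}$, so the jump character reduces to $\left(\frac{\Delta_{\Pic}(C\times C)}{\mathfrak{p}}\right)$.
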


\begin{proof} 
	This follows from the Chebotarev density theorem and the definition of the jump character since the inert primes have density $\frac{1}{2}$ in a quadratic extension.
\end{proof}

To find when $\det\Frob_{\mathfrak{p}}|_T = -1$, it suffices to find the discriminant of the Picard representation in our case. Since the leading coefficient in the characteristic polynomial above is $q^{2g + 2}$ and twisting by $1$ divides each of the eigenvalues by $q$, we have that $\left( \frac{\Delta_{H^2}(C \times C)}{\mathfrak{p}} \right) = 1$ for all good primes $\mathfrak{p}$. The means that the jump character is $\left( \frac{\Delta_{\Pic}(C \times C)}{\mathfrak{p}} \right)$ and it it suffices to find the field of definition of $\bigwedge^{\max} \Pic(C \times C)_{\overline{K}}$ in order to determine the jump character. If the determinant of the action of $\Frob_{\mathfrak{p}}$ on $\Pic(C \times C)$ is $-1$, $K(\sqrt{\Delta_{\Pic}(C \times C)})$ is actually a quadratic number field. Otherwise, $K(\sqrt{\Delta_{\Pic}(C \times C)}) = K$. 

\begin{rem}
	Recall that $\Pic(C \times C)_k \cong (\Pic(C))^2 \times \End_k J(C)$ if $C$ has a $k$-rational point (which is always true when $k = \overline{K}$). The elements of $\mathbb{Z}^2$ are classes corresponding to horizontal and vertical divisors. So, it suffices to compute $\End_k J(C)$ in order to find the minimal field of definition of $\bigwedge^{\max} \Pic(C \times C)_{\overline{K}}$. By recent work in \cite{CMSV}, such an algorithm actually exists for a curve of arbitrary genus $g$. To find the jump character, it suffices to find the determinant of the action of the Galois group of the field of definition on the endomorphism ring.
\end{rem}

We can give examples of products of curves $C \times C$ of when the jump character is trivial and nontrivial. 

\begin{exmp}\label{trivial} (Trivial character) 
	By \cite{UZ}, we have that $\End_{\overline{K}} J(C) = \mathbb{Z}[\zeta_p]$ for $C : y^p = f(x)$ with $p \ge 5$ an odd prime and $f$ a polynomial of degree $p$ with distinct roots such that the Galois group of the splitting field is $S_p$ or $A_p$. \\
	
	Then $\Pic(C \times C)_{\overline{K}} = \mathbb{Z} \times \mathbb{Z}[\zeta_p]$. Note that the field of definition is $\mathbb{Q}(\zeta_p)$. To find the field of definition of $\bigwedge^{\max} \Pic(C \times C)_{\overline{K}}$, we find the determinant of the action of multiplication by $\zeta_p$ on $\Pic(C \times C)_{\overline{K}} = \mathbb{Z} \times \mathbb{Z}[\zeta_p]$. Then, the $(p - 1) \times (p - 1)$ matrix of this action with respect to the basis $\zeta_p, \ldots, \zeta_p^{p - 1}$ is \[ \begin{pmatrix} 
	0 & \cdots & 0 & -1 \\ 1 & \cdots & 0 & -1 \\ \vdots & \cdots & \vdots & \vdots \\ 0 & \cdots & 1 & -1
	\end{pmatrix}. \]
	
	Since the determinant of this matrix is $1$, the field of definition is just $\mathbb{Q}$ and the jump character is trivial. 
	
\end{exmp}

\begin{exmp} (Nontrivial character)
	On the other hand, the character is nontrivial in the main higher genus example of \cite{CMSV} (Example 8.2.1). In this case, the endomorphism algebra is $\mathbb{Q} \times \mathbb{Q}(\sqrt{17})$. Since the jump character is nontrivial, our earlier density result applies.
\end{exmp}


\section{Mordell-Weil ranks and decomposable Jacobians}

\subsection{Mordell-Weil ranks and endomorphism rings of Jacobians}
Endomorphism rings of Jacobians are also connected to Mordell-Weil ranks of Jacobians of certain curves. The most well-known example of ths is the Shioda-Tate formula.

\begin{thm}(Shioda-Tate\cite{U})
	Let $\mathcal{X}$ be a smooth and proper surface over $k$ and $X \longrightarrow \Spec K$ be the generic fiber of $\pi$. \\
	
	Then, we have \[ \rk NS(\mathcal{X}) = \rk MW(J_X) + 2 + \sum_v (f_v - 1), \] where the sum is over closed points of $\mathbb{P}_k^1$ and $f_v$ is the number of irreducible components in the fiber of $\pi$ over $v$.
\end{thm}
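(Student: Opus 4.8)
The plan is to exhibit $NS(\mathcal{X})$ as an extension of the Mordell--Weil group $MW(J_X) = J_X(K)$ by an explicit ``trivial'' sublattice $T$, and then to read off ranks. As is implicit in the statement, I assume the fibration $\pi \colon \mathcal{X} \to \mathbb{P}^1_k$ has a section $O$, so that $J_X = \Pic^0_{X/K}$ and the $K$-rational points of $J_X$ are the degree-zero $K$-rational divisor classes on $X$; here $K = k(\mathbb{P}^1)$ and $X = \mathcal{X}_\eta$ is the generic fiber.

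First I would introduce the restriction map $\rho \colon \Pic(\mathcal{X}) \to \Pic(X)$ obtained by intersecting a divisor class with $X_\eta$. Since $\mathcal{X}$ is regular and the complement of the open generic fiber is the union of the closed fibers, excision for divisor class groups gives an exact sequence
\[ \bigoplus_{v,i} \mathbb{Z}\,\Theta_{v,i} \longrightarrow \Pic(\mathcal{X}) \xrightarrow{\ \rho\ } \Pic(X) \longrightarrow 0, \]
where the $\Theta_{v,i}$ are the irreducible components of the fibers. Thus $\rho$ is surjective with kernel generated by the vertical divisors. Splitting the degree map by $O$ gives $\Pic(X) \cong \mathbb{Z} \oplus J_X(K)$ with $J_X(K) = MW(J_X)$. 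Consequently, letting $T \subseteq NS(\mathcal{X})$ be the subgroup generated by $O$, a fiber $F$, and all the $\Theta_{v,i}$, I expect $NS(\mathcal{X})/T \cong MW(J_X)$ up to finite groups, whence $\rk NS(\mathcal{X}) = \rk T + \rk MW(J_X)$.

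Next I would compute $\rk T$. The classes $O$ and $F$ contribute $2$ and are independent since $F \cdot F = 0$ while $O \cdot F = 1$. For a point $v$ whose fiber $\pi^{-1}(v) = \sum_i m_{v,i}\Theta_{v,i}$ has $f_v$ components, this fiber is algebraically equivalent to $F$, giving one relation among the $\Theta_{v,i}$, so they add at most $f_v - 1$ classes beyond $F$. That the contribution is exactly $f_v - 1$ and that different fibers are independent follows from the intersection pairing: by Zariski's lemma the components of a single fiber span, modulo the fiber class, a negative-definite lattice of rank $f_v - 1$, and components over distinct points are orthogonal. Hence $\rk T = 2 + \sum_v (f_v - 1)$, and substituting into the previous step gives the claimed identity.

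The main obstacle is to upgrade the middle step from $\Pic$ to $NS$ and to $MW$ rigorously, since the excision sequence is a statement about $\Pic(\mathcal{X})$ while the formula concerns $NS(\mathcal{X}) = \Pic(\mathcal{X})/\Pic^0(\mathcal{X})$ and the arithmetic group $J_X(K)$. Concretely, one must show that $\Pic^0(\mathcal{X})$ restricts into the $K/k$-trace $\Tr_{K/k}(J_X)$ and invoke the Lang--N\'eron theorem to guarantee that $J_X(K)$ is finitely generated modulo this trace, so that the sequence $0 \to T \to NS(\mathcal{X}) \to MW(J_X) \to 0$ is exact up to finite and constant parts and the rank equality survives. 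Once this bookkeeping with the relative Picard functor and the N\'eron model of $J_X$ is settled, the negative-definiteness of the fiber-component lattices and the rank count of $T$ are routine, so I expect essentially all of the real work to lie in establishing this exact sequence.
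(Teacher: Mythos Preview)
The paper does not give a proof of this theorem: it is stated as the Shioda--Tate formula with a citation to Ulmer \cite{U} and is used only as background for the rank formula that follows. So there is nothing in the paper to compare your argument against.

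That said, your outline is the standard proof and is essentially correct. You have the three ingredients in the right order: surjectivity of the restriction $\Pic(\mathcal{X}) \to \Pic(X)$ with kernel generated by vertical components, the splitting $\Pic(X) \cong \mathbb{Z} \oplus J_X(K)$ via the section $O$, and the computation $\rk T = 2 + \sum_v (f_v - 1)$ from Zariski's lemma. You are also right that the only genuine subtlety is the passage from $\Pic$ to $NS$ and the role of the $K/k$-trace, handled by Lang--N\'eron; once that bookkeeping is done the rank identity drops out. One small refinement: when $k$ is not algebraically closed, $MW(J_X)$ in this formula should be read as $J_X(K)$ modulo the image of the $K/k$-trace, and the statement as written in the paper is implicitly assuming $k$ algebraically closed (or that the trace vanishes), which is consistent with how Ulmer sets things up in the cited reference.
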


Using this formula along with a construction of Berger \cite{Ber} which generates families of curves with Jacobians where BSD holds, Ulmer \cite{U} gives a new more explicit formula which gives ranks over certain types of function fields (Theorem 6.4 of \cite{U}, Theorem 9.4 of \cite{UCJ}).

\begin{thm}\label{rkformula}(Ulmer \cite{U})
	Assume that $k$ is algebraically closed. Choose smooth proper irreducible curves $\mathcal{C}$ and $\mathcal{D}$ over $k$ and non-constant separable rational functions $f : \mathcal{C} \longrightarrow \mathbb{P}_k^1$ and $g : \mathcal{D} \longrightarrow \mathbb{P}_k^1$ satisfying condition (4.1.1) on p. 7 of \cite{U}. \\
	
	Let $X$ be the smooth proper model of \[ \{ f - tg = 0 \} \subset \mathcal{C} \times_k \mathcal{D} \times_k \Spec K \] over $K := k(t)$ constructed in Section 4 of \cite{U}. If $(d, \cha k) = 1$, then \[ \rk MW(J_d) = \rk \Hom_k\left( J_{\mathcal{C}'_{d/e_{d, f}}}, J_{\mathcal{D}'_{d/e_{d, g}}} \right)^{\mu_{d/(e_{d, f} e_{d, g})}} - c_1(d) + c_2(d), \] where $K_d = k(t^{\frac{1}{d}})$. The superscript $\mu_r$ means that the homomorphisms commute with the action of $\mu_r$ on the two Jacobians, where $\mu_r$ denotes the $r^{\text{th}}$ roots of unity in $k$. \\
	
	If $e_{d, f} = e_{d, g} = 1$, the rank formula simplifies to \[ \rk MW(J_d) = \rk \Hom_k(J_{\mathcal{C}_d}, J_{\mathcal{D}_d})^{\mu_d} - c_1(d) + c_2(d). \]
\end{thm}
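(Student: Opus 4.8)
The plan is to reduce the computation of $\rk MW(J_d)$ to a N\'eron--Severi rank computation on an associated surface, and then to evaluate that rank using the fact that the surface is dominated by a product of curves; this is precisely the bridge between Mordell--Weil ranks and Picard numbers that runs through the rest of the paper. Concretely, the curve $X/K$ extends to a smooth proper relatively minimal model $\mathcal{X} \longrightarrow \mathbb{P}^1_k$ with generic fiber $X$, and base change along $t \mapsto t^{1/d}$ produces a surface $\mathcal{X}_d \longrightarrow \mathbb{P}^1_k$ whose generic fiber is $X_d$ over $K_d = k(t^{1/d})$. The Shioda--Tate formula then gives
\[ \rk MW(J_d) = \rk NS(\mathcal{X}_d) - 2 - \sum_v (f_v - 1), \]
so everything comes down to computing $\rk NS(\mathcal{X}_d)$ and understanding the contribution of the zero section, the generic fiber, and the reducible fibers.

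First I would invoke Berger's construction \cite{Ber} to produce a dominant rational map from $\mathcal{C}_d \times_k \mathcal{D}_d$ to $\mathcal{X}_d$, where $\mathcal{C}_d \longrightarrow \mathcal{C}$ and $\mathcal{D}_d \longrightarrow \mathcal{D}$ are the cyclic $\mu_d$-covers obtained by adjoining $d$-th roots of $f$ and $g$; condition (4.1.1) of \cite{U} is exactly what guarantees that this model is well-behaved and that $\mathcal{X}_d$ is birational to the quotient of $\mathcal{C}_d \times \mathcal{D}_d$ by the diagonal $\mu_d$-action. Consequently $NS(\mathcal{X}_d) \otimes \mathbb{Q}$ is identified with the $\mu_d$-invariant part of $NS(\mathcal{C}_d \times \mathcal{D}_d) \otimes \mathbb{Q}$. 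Since the Tate conjecture holds for products of curves, this invariant space is computed cohomologically: applying the K\"unneth decomposition to $H^2_{\'et}(\mathcal{C}_d \times \mathcal{D}_d, \mathbb{Q}_l)$, the pieces $H^2 \otimes H^0$ and $H^0 \otimes H^2$ contribute only the obvious divisor classes, while the interesting contribution is the $\mu_d$-invariant part of $H^1(\mathcal{C}_d) \otimes H^1(\mathcal{D}_d)$, which (again by the Tate conjecture) is exactly $\rk \Hom_k(J_{\mathcal{C}_d}, J_{\mathcal{D}_d})^{\mu_d}$.

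Collecting terms, the contributions from the K\"unneth pieces $H^2 \otimes H^0$ and $H^0 \otimes H^2$, the $\mu_d$-fixed divisors, the section and fiber class in the Shioda--Tate formula, and the reducible fibers of $\mathcal{X}_d \longrightarrow \mathbb{P}^1$ are all geometric and can be packaged into the correction terms $-c_1(d) + c_2(d)$, yielding the clean formula in the case $e_{d, f} = e_{d, g} = 1$. The general formula requires handling ramification of $f$ and $g$: when the relevant critical values of $f$ (respectively $g$) share a nontrivial factor $e_{d, f}$ (respectively $e_{d, g}$) with $d$, the degree-$d$ cyclic cover splits and the connected component that actually appears is the cover $\mathcal{C}'_{d/e_{d, f}}$ of degree $d/e_{d, f}$; tracking the residual roots-of-unity action through this splitting replaces $\mu_d$ by $\mu_{d/(e_{d, f} e_{d, g})}$ and the covers by $\mathcal{C}'_{d/e_{d, f}}$ and $\mathcal{D}'_{d/e_{d, g}}$, giving the stated refinement.

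The main obstacle is the precise bookkeeping in two places. First, one must identify the reducible fibers of $\mathcal{X}_d \longrightarrow \mathbb{P}^1$ together with the number of components $f_v$ in each, since these determine $c_1(d) + c_2(d)$; this is where the detailed geometry of the branch loci of $f$ and $g$, controlled by condition (4.1.1), enters. Second, one must correctly follow the $\mu_d$-action through Berger's birational model so that the invariance condition lands on exactly the subgroup $\mu_{d/(e_{d, f} e_{d, g})}$ and on the correct covers. Verifying that the off-diagonal K\"unneth class exhausts all of $NS(\mathcal{X}_d)$ beyond the explicitly understood pieces --- equivalently, that no further Tate classes appear --- is the crux, and it relies essentially on the product-of-curves model supplied by Berger's construction.
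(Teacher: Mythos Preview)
The paper does not give its own proof of this statement: Theorem~\ref{rkformula} is quoted as a result of Ulmer (Theorem~6.4 of \cite{U}, restated as Theorem~9.4 of \cite{UCJ}), and the paper proceeds to use it as a black box. So there is no ``paper's proof'' to compare against.

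That said, your sketch is a faithful outline of how Ulmer actually establishes the formula in \cite{U}. The ingredients you name --- Shioda--Tate to pass from $\rk MW(J_d)$ to $\rk NS(\mathcal{X}_d)$, Berger's construction exhibiting $\mathcal{X}_d$ as birational to $(\mathcal{C}_d \times \mathcal{D}_d)/\mu_d$, and the identification $NS(\mathcal{C}_d \times \mathcal{D}_d) \cong \mathbb{Z}^2 \times \Hom_k(J_{\mathcal{C}_d}, J_{\mathcal{D}_d})$ --- are exactly the pillars of Ulmer's argument. One small inaccuracy: the identification of $NS(\mathcal{X}_d)_{\mathbb{Q}}$ with the $\mu_d$-invariants of $NS(\mathcal{C}_d \times \mathcal{D}_d)_{\mathbb{Q}}$ is not literally correct, since passing from the quotient to a smooth proper model involves blow-ups that introduce exceptional classes; Ulmer handles this discrepancy explicitly, and it is part of what the constants $c_1(d)$ and $c_2(d)$ absorb. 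You flag this bookkeeping as the ``main obstacle,'' which is fair, but it is worth being clear that the equality you wrote is only true up to these correction terms, not on the nose. Also, the appeal to the Tate conjecture is unnecessary here: over an algebraically closed field the isomorphism $NS(C \times D) \cong \mathbb{Z}^2 \times \Hom(J_C, J_D)$ is classical and does not require any arithmetic input.
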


The terms in this formula depend on the homomorphisms between Jacobians of certain curves and the terms $c_1(d)$ and $c_2(d)$, which depend on geometric properties of these curves. 

\begin{rem}
	The assumption that $k$ is algebraically closed is not strictly necessary. It can be removed given some adjustments on the parameters. See Remark 6.5 of \cite{U} for more details.
\end{rem}


Although exactly determining the endomorphism ring is difficult, there is an algorithm for computing the endomorphism ring of the Jacobian of a curve of arbitrary genus \cite{CMSV}. Nevertheless, we will focus on cases where computation of the endomorphism algebra is feasible as noted by Ulmer \cite{UCJ}. More specifically, we will consider superelliptic curves whose Jacobians have products of cyclotomic fields as their endomorphism algebras.

\begin{exmp}\label{UZ}
	Let $C_{f, d}$ be the smooth projective curve over $k$ with $z^d = f(x)$ as an affine model, where $\deg f = m$ with distinct roots and $d = p^r$ for some prime $p$. Let $J_{f, q} = J(C_{f, q})$ with $q = p^r$. According to \cite{UZ}, $J_{f, q}$ is isogenous to \[ \prod_{i = 1}^r J^{(f, p^i)}, \] where $J^{(f, q)}$ is the kernel of the homomorphism of Jacobians induced by the projection $C_{f, q} \longrightarrow C_{f, \frac{q}{p}}$ defined by the map $(x, z) \mapsto (x, z^p)$. By Theorem 2.5.1 of \cite{UZ}, $\End J^{(f, q)} = \mathbb{Z}[\zeta_q]$ if $m \ge 5$ and $\Gal(f) = S_m$ or $A_m$.  \\
	
	Fixing $f$ and $m$ satisfying the conditions above, we find that we can use the simplified rank formula from Theorem \ref{rkformula}. Taking $d = q$ in Theorem \ref{rkformula}, we find that any element of the endomorphism ring actually commutes with the action of $\mu_d$ and we just get the original endomorphism ring. \\ 
	
	Since $c_1(d)$ and $c_2(d)$ depend entirely on geometric properties, jumping after reduction mod $p$ depends entirely on the endomorphism term. Rewriting this in terms of Picard numbers of $C_{f, d} \times C_{f, d}$, we can see that the characters from \cite{CEJ} can be used to describe where the Mordell-Weil rank of $J(C_{f, d})$ jumps. \\
	
	To determine the jump character from \cite{CEJ} as in Section 3, we can multiply the determinants of the action each component of the product and see whether it is $1$ or $-1$. 
\end{exmp}

Although an algorithm exists (in principle) to study the jumping of Picard numbers, using this to give a precise general answer is difficult. However, there is still an interesting example of a surface where the Picard number jumps after reduction at every good prime.

\begin{exmp}
	To give an example of such a surface, we use the construction from Theorem 7.5 of \cite{U}. 
	
	\begin{thm}\label{jumpexp}(Ulmer \cite{U})
		Take $\mathcal{C} = \mathcal{D} = \mathbb{P}_k^1$, $f(x) = x(x - 1)$, and $g(y) = \frac{y^2}{1 - y}$ in Theorem \ref{rkformula}. Note that $e_{d, f} = e_{d, g}$ in this case. \\
		
		If $\cha k = 0$, then $\rk X_d(K_d) = 0$ for all $d$. If $\cha k = p > 0$, then $\rk X_d(K_d)$ is unbounded as $d$ varies. Suppose that $k = \mathbb{F}_q$ and $d = p^n + 1$. Then, \[ \rk X_d(K_d) \ge \sum_{e|d, e > 2} \frac{\varphi(e)}{o_e(q)} \ge \frac{p^n - 1}{2n}. \] 
		
		If $k$ contains the $d^{\text{th}}$ roots of unity, then $\rk X_d(K_d) = p^n - 1$ if $p$ is odd and $\rk X_d(K_d) = p^n$ if $p = 2$. 
	\end{thm}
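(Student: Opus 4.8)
The plan is to substitute the explicit $f(x)=x(x-1)$ and $g(y)=\frac{y^2}{1-y}$ into the rank formula of Theorem \ref{rkformula} and reduce $\rk X_d(K_d)$ to a lattice-theoretic count of equivariant homomorphisms between Jacobians of cyclic covers. First I would record the ramification data: $f$ has divisor $[0]+[1]-2[\infty]$ and $g$ has divisor $2[0]-[1]-[\infty]$, so in both cases the orders of the zeros and poles form the multiset $\{1,1,2\}$; this gives $e_{d,f}=e_{d,g}=\gcd(d,2)$, confirming the parenthetical remark in the statement. The curves $\mathcal{C}_d$ and $\mathcal{D}_d$ (and their primed versions when $d$ is even) are the smooth models of the cyclic covers $z^d=f(x)$ and $w^d=g(y)$ of $\mathbb{P}^1_k$, each carrying a $\mu_d$-action by scaling the covering coordinate. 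I would then compute $c_1(d)$ and $c_2(d)$ from the fibral components of the Berger fibration attached to $(f,g)$ and check that they are characteristic-independent and absorb exactly the isotypic pieces of order $e\le 2$, so that $\rk X_d(K_d)$ equals the ``excess'' of $\rk\Hom_k(J_{\mathcal{C}_d},J_{\mathcal{D}_d})^{\mu_d}$ carried by the pieces of order $e>2$.

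Next I would decompose both Jacobians under the $\mu_d$-action into isotypic components $J_{\mathcal{C}_d}\sim\prod_\chi V_\chi$ and $J_{\mathcal{D}_d}\sim\prod_\chi W_\chi$, indexed by characters $\chi$ of $\mu_d$, so that $\Hom_k(J_{\mathcal{C}_d},J_{\mathcal{D}_d})^{\mu_d}=\bigoplus_\chi\Hom(V_\chi,W_\chi)$ and its rank is $\sum_\chi\rk\Hom(V_\chi,W_\chi)$. In characteristic $0$ the pieces $V_\chi,W_\chi$ are CM abelian varieties whose Frobenius/Hecke characters are the Jacobi-sum characters determined by the branch data of $f$ and $g$; since the four geometric branch points of $f-tg$ are in general position, the Hecke characters of $V_\chi$ and $W_\chi$ never coincide for $\mathrm{ord}(\chi)>2$, each $\Hom(V_\chi,W_\chi)$ vanishes, and combined with the cancellation of the $c_i(d)$ this forces $\rk X_d(K_d)=0$ for all $d$.

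The heart of the argument is the positive-characteristic case, and this is where I expect the main obstacle. With $k=\mathbb{F}_q$ for $q$ a power of $p$ and $d=p^n+1$ one has $p^n\equiv -1\pmod d$, hence $\chi^{p^n}=\bar\chi$ for every character $\chi$ of $\mu_d$. By the Stickelberger/Gross--Koblitz description of the Jacobi sums governing $\Frob_q$ on the pieces $V_\chi$ and $W_\chi$, this congruence forces every Frobenius eigenvalue to be a root of unity times $\sqrt q$; that is, every $V_\chi$ and $W_\chi$ is supersingular. Since all supersingular abelian varieties of a fixed dimension over $\overline{\mathbb{F}}_p$ are isogenous to a power of one supersingular elliptic curve, the geometric Hom spaces $\Hom_{\overline{\mathbb{F}}_p}(V_\chi,W_\chi)$ are as large as the matching of dimensions permits, so the Hom terms that vanished in characteristic $0$ now become maximal. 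The difficulty is precisely this supersingularity input: one must verify that the specific cyclic covers $z^d=x(x-1)$ and $w^d=\frac{y^2}{1-y}$ have supersingular Jacobians when $d\mid p^n+1$, which I would reduce to the known supersingularity of Fermat-type cyclic covers via the $p$-adic valuations of the associated Jacobi sums.

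Finally I would pass from the geometric count to the rank over $K_d=k(t^{1/d})$ by taking $\Frob_q$-invariants. Frobenius sends $\chi$ to $\chi^q$, so the $k$-rational equivariant homomorphisms of a given order $e=\mathrm{ord}(\chi)$ are counted by the Frobenius orbits on the $\varphi(e)$ primitive characters of order $e$, contributing $\varphi(e)/o_e(q)$ each; summing over $e\mid d$ with $e>2$ yields $\rk X_d(K_d)\ge\sum_{e\mid d,\,e>2}\frac{\varphi(e)}{o_e(q)}$. For the bound $\frac{p^n-1}{2n}$ I would use $p^{2n}\equiv 1\pmod d$, which gives $o_e(q)\mid o_d(q)\le 2n$ for every $e\mid d$, so the sum is at least $\frac{1}{2n}\sum_{e\mid d,\,e>2}\varphi(e)=\frac{1}{2n}(d-2)=\frac{p^n-1}{2n}$; letting $n\to\infty$ proves unboundedness. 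When $\mu_d\subset k$ one has $q\equiv 1\pmod d$, so $o_e(q)=1$ and the sum collapses to $\sum_{e\mid d,\,e>2}\varphi(e)$, which equals $d-2=p^n-1$ when $p$ is odd (since $2\mid d$) and $d-1=p^n$ when $p=2$ (since $d$ is odd), giving the stated exact values.
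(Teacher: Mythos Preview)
The paper does not give its own proof of this theorem: it is quoted verbatim as Theorem 7.5 of Ulmer \cite{U} inside an example, and the surrounding discussion only records two facts used elsewhere (that $c_1(d)=c_2(d)$ here, citing Section 7.2 of \cite{U}, and the identification $NS(C\times D)\cong\mathbb{Z}^2\times\Hom_k(J(C),J(D))$). There is therefore nothing in the present paper to compare your argument against.

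That said, your outline tracks Ulmer's own proof in \cite{U} quite closely: the $\mu_d$-isotypic decomposition of the Jacobians of the cyclic covers, the Jacobi-sum/Fermat-type analysis giving supersingularity precisely when $d\mid p^n+1$ via $p^n\equiv -1\pmod d$, the passage to $\mathbb{F}_q$-rational homomorphisms by counting Frobenius orbits on characters, and the elementary evaluation of $\sum_{e\mid d,\,e>2}\varphi(e)$ in the two parity cases are all the ingredients Ulmer uses. Two small points to tighten: first, you should state explicitly that each isotypic piece $V_\chi$ (and $W_\chi$) with $\ord(\chi)>2$ is one-dimensional for these particular $f,g$, so that the supersingular $\Hom$ contributes exactly one per orbit rather than a matrix block; second, the characteristic-$0$ vanishing is cleaner via Ulmer's observation that the relevant new parts of the Jacobians have disjoint CM types (equivalently, the Hodge structures have no common sub-Hodge-structure), rather than the somewhat vague ``branch points in general position'' phrasing you use. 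With those clarifications your sketch would be a faithful summary of the argument in \cite{U}.
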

	
	If $d = 1$, then the first term of Theorem \ref{rkformula} is exactly the ring of homomorphisms and we have the simplified formulas for $\rk X(K)$ above ($K = k(t)$). We also have that $c_1(d) = c_2(d)$ for all values of $d$ (see section 7.2 of \cite{U}). Then, the jumping in the Mordell-Weil rank after reduction is exactly due to the change in rank of the homomorphism ring after reduction since \[ NS(C \times D) \cong \mathbb{Z}^2 \times \Hom_k(J(C), J(D)) \] if $C$ and $D$ have $k$-rational points. This means that we can translate this into jumping of the Picard number in this case. In general, we can use the same idea if $\frac{d}{e_{d, f} e_{d, g}} = 1$.
	
	
	
	\begin{rem}
		There are some more general results which can help provide heuristics about where to expect the Mordell-Weil rank of curves in Theorem \ref{rkformula} to jump and how much they should jump.
		
		\begin{enumerate}
			\item Work on a conjecture of Murty and Patankar \cite{MP} on the splitting of simple abelian varieties over a number field can be used to guess how often the rank of the endomorphism algebra should jump. More specifically, they conjectured that the set of good places where the reduction of a simple abelian variety is also simple has density 1 if and only if the geometric endomorphism ring is commutative. Although Zywina \cite{Z} has shown that this conjecture is false without possibly replacing the ground field by a finite extension, he has proven results that point toward the general conjecture of Murty and Patankar (see Theorem 1.2 and Corollary 1.3 of \cite{Z}). \\
			
			For example, the Jacobians of the curves discussed in Example \ref{UZ} are simple abelian varieties which we expect to stay simple after reduction by primes in a set of density $1$ if they satisfy additional conditions given in Corollary 1.3 of \cite{Z}. This is consistent with the jump character of the self-product of the curves being trivial in Example \ref{trivial}. It would be interesting to compare how consistent the jump character is with results related to the Murty-Patankar conjecture in general. 
			
			\item For curves in Theorem \ref{rkformula}, the variance of the Mordell-Weil rank mainly varies on how the rank of the homomorphism ring varies. Since $NS(C \times D) \cong \mathbb{Z}^2 \times \Hom_k(J(C), J(D))$, how much the rank of the homomorphism ring jumps is equivalent to finding how much the Picard number jumps. Consider the case where $C = D$ (i.e. the endomorphism ring). The analysis of Picard numbers used to prove Theorem \ref{gbound} can be used to give heuristics on the distribution of the Picard numbers of reductions of a product of curves $C \times C$. For a ``random'' hyperelliptic curve $C$ of genus $g$, the trace of a random matrix in the group $USp(2g)$ of $2g \times 2g$ unitary symplectic matrices gives upper bounds on the distribution of Picard numbers of reductions of $C \times C$ mod good primes. \\
			
			By the proof of Theorem \ref{gbound}, we have that $\Tr \varphi_2^* = 2q + F_1^2$ and that the Picard number over $\overline{\mathbb{F}}_q$ is the multiplicity of $q$ as an eigenvalue of the action of Frobenius on $H^2_{\'et}((C \times C)_{\overline{\mathbb{F}_q}}, \mathbb{Q}_l)$. This means that the Picard number is bounded above by $\frac{F_1^2}{q} = \left( \frac{F_1}{\sqrt{q}} \right)^2$. By work of Katz and Sarnak \cite{KS}, the distribution of $\frac{F_1}{\sqrt{q}}$ is the trace of a random matrix in the group $USp(2g)$ of $2g \times 2g$ unitary symplectic matrices. \\
		
			Thus, this kind of analysis can also be used to control jumps of Picard numbers of products of curves $C \times C$ after reduction modulo good primes. From work of Ulmer \cite{U}, this also applies to changes of Mordell-Weil ranks of Jacobians of certain curves after specialization.
			 
		\end{enumerate}
	\end{rem}
	
\end{exmp}

\subsection{Decomposable Jacobians and Picard numbers of reductions}

Assuming the Sato-Tate conjecture and building on the work of Kukulies \cite{K}, Chen, Lu, and Zuo \cite{CLZ} prove that the genus of smooth projective curves over number fields of bounded degree whose Jacobians are isogenous to a self-product a single elliptic curve is bounded (see Theorem 1.2 of \cite{CLZ}). By considering the Picard numbers of self-products of such curves, we are able prove a result on decomposable Jacobians over number fields without assuming the Sato-Tate conjecture. This depends on showing that decomposability has implications for point counting. \\


%

	\begin{thm} \label{gbound}
	\hfill
	\begin{enumerate}
		\item 
		\begin{itemize}
			\item Suppose that the Jacobian $J(C)$ of a smooth projective curve $C$ of genus $g$ over a number field $K$ is isogenous to $E^g$ for some elliptic curve $E$ over $K$ with supersingular reduction at a prime of norm $\le N$. Then, $g \le G(K, N)$ for some constant $G(K, N)$ depending on $K$ and $N$. 
			
			\item Suppose that $C$ and $J(C)$ have good reduction at the same places. If we bound the Faltings height of $E$ above by $h$, the degree of the isogeny by $d$, and the degree of the number field by over which $C$ is defined by $m$, then $g \le G(h, d, m)$ for a constant $G(h, d, m)$ depending on $h$, $d$, and $m$.
			
			\item Suppose that $[K : \mathbb{Q}]$ is odd or $K$ has a real place and $J(C)$ is isogenous to $E^g$ over $K$ for some elliptic curve $E$ over $K$. Then, there are infinitely many primes $\mathfrak{p} \subset \mathcal{O}_K$ such that the reduction of $C$ mod $\mathfrak{p}$ is maximal after a field extension of degree $\le 3$. 
		\end{itemize}

		
		\item A curve over $\mathbb{C}$ is said to have \emph{many automorphisms} if it cannot be deformed nontrivially with its automorphism group (see p. 2 of \cite{MP1} and the definition on p. 66 of \cite{Po})  Let $K = \mathbb{Q}(i)$ and $C : y^2 = f(x)$ be a hyperelliptic smooth projective curve over $K$ of given genus $g \ge 25$ with many automorphisms with a cyclic automorphism group. Then, $C$ is not isogenous to $E^g$ for any elliptic curve $E$ with CM by an order in $K$. 
		
		
		\item  Let $C$ be a curve over $\overline{\mathbb{Q}}$ with $J(C)$ isogenous to $E^g$ over $\overline{\mathbb{Q}}$ for some elliptic curve $E$ over $\overline{\mathbb{Q}}$ with $j$-invariant $j$. Suppose that $E$ has CM by an order in an imaginary quadratic field $K = \mathbb{Q}(\sqrt{d})$. Let $L = \mathbb{Q}(j)$ and suppose that $C$ and $E$ have $L$ as a minimal field of definition. Then, there are infinitely many primes $\mathfrak{p} \subset \mathcal{O}_L$ such that the reduction of $C$ mod $\mathfrak{p}$ is maximal or minimal in the following cases:
		
		\begin{itemize}
			\item $K = \mathbb{Q}(\sqrt{d})$ such that $d$ is not a quadratic residue mod $p$ for infinitely many primes $p$ such that $p \equiv 1 \pmod {12}$ and $L = \mathbb{Q}(\zeta_{2^k})$ for some $k \ge 2$ and contains $K$, where $\zeta_m$ is a primitive $m^{\text{th}}$ root of unity. 
			
			\item $K$ is the same as above and $L = K$.
		\end{itemize}
		
	\end{enumerate}
\end{thm}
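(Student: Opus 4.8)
The unifying engine across all three parts is point-counting at good primes via the factorization of the Frobenius characteristic polynomial. If $J(C)\sim E^g$ and $\mathfrak p$ is a prime at which both $C$ and $E$ have good reduction (residue field of size $q_0$), then $J(\bar C)\sim\bar E^g$, so the characteristic polynomial of Frobenius on $H^1$ of $\bar C$ is the $g$-th power of that of $\bar E$: the $2g$ Weil numbers of $\bar C$ are the two Weil numbers $\alpha,\bar\alpha$ of $\bar E$, each with multiplicity $g$. Hence $\bar C/\mathbb F_Q$ is maximal (resp.\ minimal) precisely when $\bar E/\mathbb F_Q$ is, i.e.\ when $\alpha=-\sqrt Q$ (resp.\ $+\sqrt Q$). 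When $\bar E$ is supersingular, $\alpha/\sqrt{q_0}$ is a root of unity of order $m\in\{1,2,3,4,6\}$, so for $m$ even a residue-field extension of degree $m/2\le 3$ forces all Weil numbers to $-\sqrt Q$ and makes $\bar C$ maximal. Finally, the Ihara bound for a maximal curve of genus $g$ over $\mathbb F_Q$ with $Q$ a square gives $g\le\tfrac12\sqrt Q(\sqrt Q-1)$, turning any such maximality statement over a field of controlled size into an upper bound on $g$.

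I would then handle Part 1 as follows. For 1(a), I apply the engine at the given supersingular prime $\mathfrak p$ of norm $\le N$ (taken to be a prime of good reduction for $C$): a degree $\le 3$ extension makes $\bar C$ maximal over some $\mathbb F_Q$ with $Q\le N^3$, and the Ihara bound yields $g\le G(K,N)$ (in fact a bound depending only on $N$). Part 1(c) rests on the observation that ``$[K:\mathbb Q]$ odd or $K$ has a real place'' is exactly Elkies' hypothesis guaranteeing that $E$ has infinitely many supersingular primes; discarding those above $2$ and $3$ and keeping those at which $\alpha/\sqrt{q_0}$ has even order, each produces $\bar C$ maximal after a degree $\le 3$ extension, giving infinitely many such $\mathfrak p$. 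Part 1(b) reduces to 1(a): using that $E$ has Faltings height $\le h$ over a field of degree $\le m$, I produce a supersingular prime of good reduction, coprime to $d$ (so the isogeny reduces to an isogeny $J(\bar C)\sim\bar E^g$) and with residue characteristic $>3$, of norm bounded by some $N_0(h,d,m)$; then 1(a) gives $g\le G(h,d,m)$. The delicate input here is an effective bound on the smallest such supersingular prime in terms of the Faltings height.

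For Part 2 I would first invoke the classification of hyperelliptic curves over $\mathbb C$ with many automorphisms and cyclic automorphism group: rigidity forces $C\to C/\Aut(C)=\mathbb P^1$ to be branched at exactly three points, so $C$ is a hyperelliptic cyclic triangle cover (the family $y^2=x^{2g+1}-1$ and its finite list of siblings), and a Riemann--Hurwitz count gives $\Aut(C)=\langle\sigma\rangle$ cyclic of order $n=\Theta(g)$. If $J(C)\sim E^g$ with $E$ having CM by an order in $\mathbb Q(i)$, then $\End^0(J(C))\cong M_g(\mathbb Q(i))$, a central simple algebra with center $\mathbb Q(i)$. The image of $\sigma$ generates the cyclotomic field $\mathbb Q(\zeta_n)$, and since the center commutes with $\sigma$ the compositum $\mathbb Q(\zeta_n,i)=\mathbb Q(\zeta_{\lcm(n,4)})$ is a subfield containing $\mathbb Q(i)$; as it acts $\mathbb Q(i)$-linearly on $H_1(J(C),\mathbb Q)\cong\mathbb Q(i)^g$, its degree $[\mathbb Q(\zeta_n,i):\mathbb Q(i)]=\tfrac12\varphi(\lcm(n,4))$ must divide $g$. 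I would then compute this cyclotomic degree for each family (for $y^2=x^{2g+1}-1$ it equals $\varphi(2g+1)$) and check that the divisibility fails once $g\ge 25$; the constant $25$ is exactly what is needed to clear the sporadic small-genus coincidences across the finite list of families. I expect this to be the principal obstacle, since it requires both the precise family list and a uniform non-divisibility verification.

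Finally, for Part 3 the curve $E$ has CM by an order in the imaginary quadratic field $K=\mathbb Q(\sqrt d)$, and over $L=\mathbb Q(j)$ its supersingular primes are exactly those over rational primes inert in $K$, a set of density $\tfrac12$ by Chebotarev. When $L=K$, at an inert prime the residue field is $\mathbb F_{p^2}$ and $\bar E$ already has Frobenius $\pm p$, so $\bar C$ is maximal or minimal over $\mathbb F_{p^2}$ with no further extension, and there are infinitely many such $p$. When $L=\mathbb Q(\zeta_{2^k})\supseteq K$, the congruence $p\equiv 1\pmod{12}$ together with $d$ a nonresidue mod $p$ selects, via Chebotarev in $\mathbb Q(\zeta_{2^k},\sqrt d)$, infinitely many $p$ that are simultaneously inert in $K$ and have controlled splitting in $L$, so that at $\mathfrak p\mid p$ one has $a_{\mathfrak p}=0$ and a degree $\le 2$ extension makes $\bar C$ maximal or minimal. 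Across the whole theorem the genuinely quantitative points are the effective supersingular-prime bound in 1(b) and, above all, the uniform cyclotomic non-divisibility underlying the constant $25$ in Part 2; Parts 1(a), 1(c) and 3 then follow once the supersingular-to-maximal engine is combined with the Elkies and Chebotarev density inputs.
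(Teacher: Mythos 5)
Your Part 1 engine is, in substance, the paper's argument with the packaging removed: the paper routes through the Picard number of $C\times C$, computing $\Tr \varphi_2^* = 2q + F_1^2$ via Grothendieck--Lefschetz and using the Tate conjecture to read the Picard number off as the multiplicity of $q$, concluding $|F_1| = 2g\sqrt{q}$, i.e.\ that the reduction of $C$ must hit the Hasse--Weil \emph{upper or lower} bound after a small extension; your direct passage through the Weil numbers of $\bar{E}$ is equivalent. But you argue only the maximal branch: when $\alpha/\sqrt{q_0}$ has odd order ($1$ or $3$), no residue-field extension ever makes the reduction maximal, and Ihara's bound \cite{Iha} alone does not close the case. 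The paper fills exactly this hole with Lauter's observation \cite{L} that the Hasse--Weil \emph{lower} bound is negative once $g$ is large relative to $q$, so the minimal case also bounds $g$; your 1(a) needs this added. For 1(b), your reduction to 1(a) hinges on an effective bound $N_0(h,d,m)$ on the least supersingular prime, which you assert and flag as delicate but do not supply; the paper does not prove it either --- it cites Proposition 4.4 of \cite{CLZ} --- so your route here is an unproven substitute for a step the paper outsources.

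Part 2 is where you take a genuinely different route, and where the larger gaps sit. The paper's proof is reduction-theoretic: the many-automorphisms hypothesis with cyclic automorphism group and $g \ge 25$ forces $C : y^2 = x^{2g+1} - 1$ (this classification, via \cite{MP1}, is the \emph{only} place $25$ enters --- it is not there to kill sporadic divisibility coincidences, as you guess), then Theorem 3.6 of \cite{KNT} makes the reduction maximal or minimal (hence $J(C)$ supersingular) over $\mathbb{F}_{p^2}$ under a congruence on $p$, while Deuring's criterion makes $E$ ordinary under a complementary congruence mod $4$; Dirichlet produces infinitely many primes realizing the clash. Your characteristic-zero obstruction --- $\mathbb{Q}(\zeta_n)$ generated by the order-$n$ automorphism inside $\End^0(J(C)) \cong M_g(\mathbb{Q}(i))$, then a degree-divisibility contradiction over the center --- is attractive and would prove a stronger, purely geometric statement, but as written it has two unverified steps: $\mathbb{Q}[\sigma]$ need not be a field (for $2g+1$ composite the minimal polynomial of $\sigma$ on $H_1$ is a product of several cyclotomic polynomials, so you must decompose into isotypic factors before comparing degrees), and the uniform non-divisibility computation across the family list is precisely the verification you identify as the principal obstacle and leave open.

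In Part 3 the gap is substantive rather than organizational. Your claim that at a prime of $L=K$ over an inert $p$ the reduction of $E$ ``already has Frobenius $\pm p$'' over $\mathbb{F}_{p^2}$, so that the reduction of $C$ is maximal or minimal with no further extension, is unjustified: a supersingular elliptic curve over $\mathbb{F}_{p^2}$ can have trace $0$, $\pm p$, or $\pm 2p$, and which occurs depends on the twist, not merely on supersingularity. This twist-dependence is exactly why the paper runs the argument through Karemaker--Pries \cite{KP}: the fully maximal / fully minimal / mixed trichotomy, Theorems 6.3 and 4.6(3), and Table 1 there, together with the splitting analysis in $L = \mathbb{Q}(\zeta_{2^k})$ (unramifiedness for odd $p$, even inertia degree, not split completely, secured by Dirichlet), are what pin down the fully maximal or fully minimal conclusion. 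Your sketch never engages with this and never explains the role of the hypothesis $p \equiv 1 \pmod{12}$, which in the paper comes from the conditions in Table 1 of \cite{KP}; as written, your argument can fail for an unfavorable twist.
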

	
	\begin{rem} \hfill
		\begin{enumerate} 
			\item In part 2, we can say something similar about any hyperelliptic curve with many automorphisms of odd degree with sufficiently large genus if $C$ and $E$ are taken to be over a number field where the curve $y^2 = x^{2g + 1} - x$ is isomorphic to the curve $y^2 = x^{2g + 1} + x$. Then, we can use Theorem 3.6 of \cite{KNT} and the fact that our elliptic curve must have CM. The results of Theorem 3.6 and congruence conditions from Deuring's criterion to get incompatible congruence conditions (see proof of part 2).
			
			\item We can obtain a result similar to part 3 and actually get infinitely many primes where the reductions of curves of the form $C : x^n + y^m = 1$ mod $\mathfrak{p} \subset \mathbb{Q}(i)$ are \emph{maximal} for infinitely many primes $\mathfrak{p}$ in $\mathbb{Q}(i)$ if $E$ has CM by an order in $\mathbb{Q}(i)$ and $C$ is taken to be a curve over $\mathbb{Q}(i)$ using Theorem 5 of \cite{TT}. 
			
			\item Parts 2 gives restrictions on jumping of Picard numbers of $C \times C$ for certain curves $C$ with Jacobians isogenous to powers of elliptic curves and part 3 gives an example where it jumps at infinitely many primes for certain curves $C$.
		\end{enumerate}
	\end{rem}
	
\begin{proof}[Proof of Theorem \ref{gbound}]
	\hfill
	\begin{enumerate}
		\item 
		Suppose that we have a curve $C$ over $K$ such that $J(C)$ is isogenous to $E^g$ over $K$ for some $E$ satisfying the conditions above. Let $\mathfrak{p} \subset \mathcal{O}_K$ be a prime where $E$ has supersingular reduction. Let $q$ be the size of the residue field $\mathbb{F}_{\mathfrak{p}}$. Since the Tate conjecture holds for a product of curves, the Picard number is the multiplicity of $q$ as an eigenvalue of the action of $\Frob_{\mathfrak{p}}$ on $H_{\'et}^2 ((C \times C)_{\overline{\mathbb{F}}_{\mathfrak{p}}}, \mathbb{Q}_l)$ at a good prime $\mathfrak{p}$ if $l \ne \cha \mathbb{F}_{\mathfrak{p}}$. This means that we can interpret the Picard number in terms of point counting on $C$. \\
		
		Our method of analyzing the Picard number follows the specialization method outlined in van Luijk's thesis \cite{vL}. Let $F_1$ be the trace of the action of $\Frob_{\mathfrak{p}}$ on $H^1_{\'et}(C_{\overline{\mathbb{F}_{ \mathfrak{p}}} }, \mathbb{Q}_l)$ and $f(C, T)$ be the numerator of the Weil zeta function $Z(C, T)$ as in \cite{Y}. Let $X = C \times C$. Write $P_i(X, T)$ for the reverse characteristic polynomial of the action of the absolute Frobenius map $\Frob_{\mathfrak{p}}$ on $H^i_{\'et}(X_{\overline{\mathbb{F}_{ \mathfrak{p}}} }, \mathbb{Q}_l)$. Then, we have $P_0(X, T) = 1 - T$ and $P_4(X, T) = 1 - q^2 T$ by definition. \\
		
		By Lemma 1.1 of \cite{Y}, we can use a K\"unneth formula for \'etale cohomology to find that $P_1(X, T) = f(C, T)^2$ and $P_3(X, T) = f(C, qT)^2$. Writing $q = p^r$ for a prime $p$, let $\varphi$ be the $r^{\text{th}}$ power of the absolute Frobenius map for $X$ and $\varphi_i^*$ for the induced map on $H^i_{\'et}(X_{\overline{\mathbb{F}_{ \mathfrak{p}}} }, \mathbb{Q}_l)$. Since the $P_i$ give \emph{reverse} characteristic polynomials of the action of the Frobenius map, the coefficient of the linear term multiplied by $-1$ gives us the trace of the action of the this map. This means that $\Tr \varphi_0^* = 1$, $\Tr \varphi_1^* = 2F_1$, $\Tr \varphi_3^* = 2qF_1$, and $\Tr \varphi_4^* = q^2$.  \\
		
		To compute the remaining trace $\Tr \varphi_2^*$, we use the Grothendieck-Lefschetz trace formula, which gives \[ \#X(\mathbb{F}_q) = \sum_{i = 0}^4 (-1)^i \Tr \varphi_i^* \] in our case. Since $X = C \times C$ and $\#C(\mathbb{F}_q) = q + 1 - F_1$, this means that
		
		\begin{align*}
		(q + 1 - F_1)^2 &= \Tr \varphi_0^* - \Tr \varphi_1^* + \Tr \varphi_2^* - \Tr \varphi_3^* + \Tr \varphi_4^* \\
		&= 1 - 2F_1 + \Tr \varphi_2^* - 2q F_1 + q^2 \\
		\Rightarrow \Tr \varphi_2^* &= (q + 1 - F_1)^2 - 1 + 2F_1 + 2q F_1 - q^2 \\
		&= (q + 1)^2 - 2F_1(q + 1) + F_1^2 - 1 + 2F_1 + 2q F_1 - q^2 \\
		&= q^2 + 2q + 1 - 2q F_1 - 2F_1 + F_1^2 - 1 + 2F_1 + 2q F_1 - q^2 \\
		&= 2q + F_1^2.
		\end{align*}
		
		Since the Tate conjecture holds for $X = C \times C$, the Picard number is the multiplicity of $q$ as an eigenvalue of the action of $\Frob_{\mathfrak{p}}$ on $H^2(X_{\overline{\mathbb{F}}_q}, \mathbb{Q}_l)$. We also have that the maximal Picard number $4g^2 + 2$ is attained after some finite extension since $E$ has supersingular reduction at $\mathfrak{p}$. In order for the maximal Picard number $4g^2 + 2$ to be attained, we need $F_1^2 = 4g^2q \Rightarrow |F_1| = 2g\sqrt{q}$. \\
		
		This means that $\#C(\mathbb{F}_q)$ must either reach the upper or lower bound given by the Hasse-Weil bound over some finite extension (of degree $\le 3$ -- see below) in order for this to happen. As mentioned in \cite{Y}, this assumes $q$ is a large even power since we take the Frobenius endomorphisms on $J(C)$ to be rational. When the genus of $C$ is sufficiently large, a result of Ihara \cite{Iha} (e.g. see Theorem 2.6 in \cite{V}) says that the upper bound cannot be obtained if the genus is large compared to $q$. As observed by Lauter \cite{L}, the lower bound given by the Hasse-Weil bound is negative when $q$ is small compared to the genus $g$ (i.e. the same situation as above). \\
		
		So far, we have shown that the genus is bounded given a \emph{specific} prime of $K$ where $J(C)$ has good reduction and $E$ has supersingular reduction for some elliptic curve $E$. Since the number of prime ideals of a given number field with norm $\le N$ is bounded, we still get a bound for primes of norm $\le N$ for a fixed number field $K$. \\
		
		Then, the result after assuming a bound on the Faltings height follows from the proof of Proposition 4.4 of \cite{CLZ}. \\
		
		If $\cha \mathbb{F}_{\mathfrak{p}} >> 0$, then the Hasse-Weil bound is attained after an extension of degree $\le 2$ by Theorem 2.5 of \cite{Y} since $C \times C$ attains has the maximal Picard number $4g^2 + 2$ in this case. Combining this with the fact that there are infinitely many supersingular primes in the cases listed above (\cite{E}, \cite{E2}) gives us infinitely many primes where the third statement holds.
		
				\item Since $g \ge 25$ and $C$ has many automorphisms as a curve over $\mathbb{C}$ with cyclic automorphism group, we have that $C$ is the curve $y^2 = x^{2g + 1} - 1$. By Theorem 3.6 of \cite{KNT} (also see Theorem 2 of \cite{Va}), the reduction of $C$ mod $\mathfrak{p}$ lying above $p$ in $K$ is maximal or minimal over $\mathbb{F}_{p^2}$ if $p \equiv -1 \pmod {4g}$ for sufficiently large $p$ since we are considering a curve of a fixed genus $g$. Recall that Deuring's criterion states that an elliptic curve with CM by an order in an imaginary quadratic field $K$ has supersingular reduction at a good prime $\mathfrak{p}$ lying over $p$ if and only if only one prime lies above $p$ in $K$. For example, this includes inert primes. In our case, this occurs when $p \equiv 3 \pmod 4$. On the other hand, we have ordinary reduction if $p \equiv 1\pmod 4$. Combining this with the condition for $p \equiv -1 \pmod{4g}$, we find that there are infinitely many primes of $K$ where $J(C)$ should be supersingular and $E$ has ordinary reduction by Dirichlet's theorem on primes in arithmetic progressions. Thus, $J(C)$ cannot be isogenous to $E^g$ over $\mathbb{Q}(i)$.  
				
				\item In \cite{KP}, Karemaker and Pries classified supersingular abelian varieties over finite fields according to the maximality or minimality of point counts after a finite extension. They put the supersingular abelian varieties $A$ over finite fields $\mathbb{F}_q$ in the following categories:
				
				\begin{itemize}
					\item If each of the $\mathbb{F}_q$-twists of $A$ has a finite extension of $\mathbb{F}_q$ where it attains the Hasse-Weil upper bound, then $A$ is \emph{fully maximal}.
					
					\item If none of the $\mathbb{F}_q$-twists of $A$ have this property, then $A$ is \emph{fully minimal}.
					
					\item  If some (but not all) of the $\mathbb{F}_q$-twists of $A$ attain the Hasse-Weil upper bound over some finite extension, then $A$ is \emph{mixed}.
				\end{itemize}
				
				Let $\mathfrak{p}$ be a prime where $E$ has supersingular reduction and $p = \cha \mathbb{F}_{\mathfrak{p}}$. By Theorem 6.3 of \cite{KP} and Theorem 4.6(3) of \cite{KP}, $E^g$ is mixed and $J(C)$ must be mixed if it is isogenous to $E^g$ over $\mathbb{F}_{\mathfrak{p}}$ and the $j$-invariant of the specialization $\tilde{E}/\overline{\mathbb{F}}_p$ is not in $\mathbb{F}_p$. Recall from the previous part that the the reduction of the curve $C$ mod $\mathfrak{p}$ must attain the upper or lower bound given by the Hasse-Weil bound after a finite extension. \\
				
				Let the \emph{period} be the minimal degree of such an extension as in \cite{KP}. From part $1$, this is at most $3$ if $p >> 0$. By Deuring's criterion, an elliptic curve $E$ with CM by $K$ has supersingular reduction at $\mathfrak{p}$ if and only if there is a unique prime of $K$ lying above $p$. For example, this includes the inert primes. By the conditions listed in the above, there are infinitely many inert $p$ in $K$ such that $p \equiv 1 \pmod{12}$. Suppose that $L = \mathbb{Q}(\zeta_{2^k})$. Since $L/\mathbb{Q}$ is a Galois extension, we have that $[L : \mathbb{Q}] = efg$, where $e$ is the ramification index, $f := [\mathbb{F}_{\mathfrak{p}} : \mathbb{F}_p ]$ is the residue field extension (also called inertia degree), and $g$ is the number of primes lying above $p$. We would like to show that there are infinitely many $p$ where $f$ is even and $g \ne [L : \mathbb{Q}]$. Note that $f$ is even if $f \ne 1$ in our case since $f$ divides $[L : \mathbb{Q}] = \varphi(2^k) = 2^{k - 1}$ and $k \ge 2$. \\
				
				We also have that $p$ is unramified in $L$ for $p \ne 2$ (see Proposition 2.3 of \cite{W}). This means that $e = 1$ for $p \equiv 1 \pmod {12}$. By Theorem 2.13 of \cite{W}, $p$ does not split completely as long as $p \not\equiv 1 \pmod {2^k}$. Fixing a suitable residue mod $2^k$, it follows from Dirichlet's theorem on prime numbers in arithmetic progressions that there are infinitely many primes $p \equiv 1 \pmod {12}$ such that $p$ does not split completely. Comparing this with the conditions on $r$ and $p$ in Table 1 below Lemma 6.1 of \cite{KP}, we see that the reduction of $C$ is fully maximal or fully minimal at these primes. \\
				
				
				Suppose that $\mathbb{Q}(\sqrt{d})$ is an imaginary quadratic field such that $d$ is not a quadratic residue mod $p$ for infinitely many primes such that $p \equiv 1 \pmod {12}$ and $L = K$. Then, the reduction of $C$ is maximal or minimal at these primes by Table 1 below Lemma 6.1 of \cite{KP}. 
				

			\end{enumerate} 
		
	\end{proof}

Using the fact that a Jacobian has an irreducible principal polarization (see Lemma 2.2 of \cite{La}), we obtain the following corollary:

\begin{cor}\label{isombound}
	The genus $g$ of a smooth projective curve $C$ over a number field $K$ such that $J(C)$ isomorphic to a product of elliptic curves is bounded in each of the conditions listed in part 1 of Theorem \ref{gbound}.
\end{cor}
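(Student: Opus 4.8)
The plan is to reduce the isomorphism hypothesis to the isogeny hypothesis already handled in part 1 of Theorem \ref{gbound}, using the irreducibility of the canonical principal polarization to pin down the isogeny type of the factors. Suppose $J(C) \cong E_1 \times \cdots \times E_g$ as abelian varieties (over $K$ or $\overline{K}$, according to the case being considered). A fortiori $J(C)$ is isogenous to $\prod_i E_i$, so the first and essential step is to show that all of the $E_i$ lie in a single isogeny class. Once this is established, $J(C)$ is isogenous to $E^g$ with $E = E_1$, and each reduction or height condition of part 1 can be applied verbatim, since supersingular reduction at a given prime and the Faltings height are isogeny invariants; this yields the desired bound $g \le G$.

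To prove that the $E_i$ are pairwise isogenous, I would argue by contradiction through the polarization. By Lemma 2.2 of \cite{La}, the canonical principal polarization $\lambda \colon J(C) \to \widehat{J(C)}$ is irreducible, i.e. $(J(C), \lambda)$ is indecomposable as a principally polarized abelian variety. Assume the $E_i$ split into at least two distinct isogeny classes, and group them as $J(C) \cong A_1 \times A_2$ where $A_1$ and $A_2$ share no common isogeny factor. Then $\Hom(A_1, A_2) = \Hom(A_2, A_1) = 0$, and since $\widehat{A_1}$ and $\widehat{A_2}$ are isogenous to $A_1$ and $A_2$ respectively, also $\Hom(A_1, \widehat{A_2}) = \Hom(A_2, \widehat{A_1}) = 0$. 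Writing $\lambda$ as a block homomorphism into $\widehat{A_1} \times \widehat{A_2}$, the two off-diagonal blocks therefore vanish, so $\lambda = \lambda_1 \times \lambda_2$ splits as a product polarization with each $\lambda_i$ a principal polarization on $A_i$. This exhibits $(J(C), \lambda)$ as a nontrivial product of principally polarized abelian varieties, contradicting its irreducibility.

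With the claim in hand, $J(C)$ is isogenous to $E^g$, and each of the three conditions of part 1 of Theorem \ref{gbound} applies directly. The only point requiring genuine care — everything else being bookkeeping — is the block-decomposition step: one must verify that the vanishing of homomorphisms between non-isogenous factors is inherited by the dual abelian varieties, so that \emph{both} off-diagonal blocks of $\lambda$ vanish and the polarization truly decomposes. This is precisely where the irreducibility statement of Lemma 2.2 of \cite{La} carries the weight of the argument, since without it a product of non-isogenous elliptic curves could a priori support a principal polarization and the reduction to the $E^g$ case would break down.
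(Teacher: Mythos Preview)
Your proof is correct and follows essentially the same route as the paper: both arguments reduce the isomorphism hypothesis to the isogeny hypothesis of Theorem~\ref{gbound} by observing, via Lemma~2.2 of \cite{La}, that the irreducibility of the theta polarization forces all of the elliptic factors to lie in a single isogeny class. The paper simply asserts that otherwise every polarization on $J(C)$ would be reducible, whereas you spell out the block-diagonalization of $\lambda$ explicitly; this is just a more detailed rendition of the same idea.
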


\begin{proof}
	Note that $J(C)$ is isomorphic to a product of elliptic curves only if it is isogenous to a self-product of elliptic curves $E^g$. Otherwise, the only polarizations of $J(C)$ would be reducible although the canonical polarization from the theta divisor is irreducible (see Lemma 2.2 of \cite{La}). This reduces our problem to Theorem \ref{gbound}.
\end{proof}

\end{document}